\renewenvironment{proof}[1][\proofname] {\par\pushQED{\qed}\normalfont\topsep6\p@\@plus6\p@\relax\trivlist\item[\hskip\labelsep\bfseries#1\@addpunct{.}]\ignorespaces}{\popQED\endtrivlist\@endpefalse}
\numberwithin{equation}{section}
\titleformat*{\section}{\large\bfseries}
\newtheorem{theorem}{Theorem}[section]
\newtheorem{lemma}{Lemma}[section]
\theoremstyle{definition}
\newtheorem{definition}{Definition}[section]
\begin{document}

\begin{center}
\textbf{On Ranges of Variants of the Divisor Functions that are Dense}  
\vskip 1cm
\footnotesize{
Colin Defant\footnote{Colin Defant \\ 
18434 Hancock Bluff Rd. \\
Dade City, FL 33523}\footnote{This work was supported by National Science Foundation grant no. 1262930.}\\
Department of Mathematics, University of Florida\\ 
1400 Stadium Rd\\ 
Gainesville, FL 32611\\ 
United States\\ 
cdefant@ufl.edu}
\end{center}
\vskip .2 in

\begin{abstract}
\footnotesize{For a real number $t$, let $s_t$ be the multiplicative arithmetic function defined by $\displaystyle{s_t(p^{\alpha})=\sum_{j=0}^{\alpha}(-p^t)^j}$ 
for all primes $p$ and positive integers $\alpha$. We show that the range of a function $s_{-r}$ is dense in the interval $(0,1]$ whenever $r\in(0,1]$. We then find a constant $\eta_A\approx1.9011618$ and show that if $r>1$, then the range of the function $s_{-r}$ is a dense subset of the interval $\displaystyle{\left(\frac{1}{\zeta(r)},1\right]}$ if and only if $r\leq \eta_A$. We end with an open problem. 
\bigskip

\noindent \emph{Keywords: } Dense; divisor function; alternating divisor function; range.

\noindent 2010 {\it Mathematics Subject Classification}:  Primary 11B05; Secondary 11A25.} 
\end{abstract}

\section{Introduction} 
Let $\mathbb{N}$ denote the set of positive integers. We will let $p_i$ be the $i^{th}$ prime number, and we will use $\zeta$ to denote the Riemann zeta function.  
\par 
Consider a multiplicative arithmetic function $s_1$ defined by 
\begin{equation} 
s_1(p^{\alpha})=\sum_{j=0}^{\alpha}(-p)^j
\end{equation} 
for all primes $p$ and positive integers $\alpha$. This function, which appears as sequence A061020 in Sloane's Online Encyclopedia of Integer Sequences \cite{OEIS10}, serves as an interesting variant of the well-known sum-of-divisors function $\sigma$. We may generalize the function $s_1$ to a class of functions $s_t$ in the following very natural fashion.  
\begin{definition} \label{Def1.1} 
For any real number $t$, let $s_t$ be the multiplicative arithmetic function defined by \begin{equation}
s_t(p^{\alpha})=\sum_{j=0}^{\alpha}(-p^t)^j.
\end{equation} 
for all primes $p$ and positive integers $\alpha$. 
\end{definition}   
In this paper, we will concentrate on functions $s_{-r}$ for $r>0$, so we will always use $r$ to denote a positive real number. Notice that, for any prime $p$ and nonnegative integer $\alpha$, we have $1-p^{-r}\leq s_{-r}(p^{\alpha})\leq 1$ because  $\displaystyle{\sum_{j=0}^{\alpha}(-p^{-r})^j}$ is an alternating series whose terms have strictly decreasing absolute values. Therefore, if $r>1$ and $N$ is a positive integer with canonical prime factorization $\displaystyle{N=\prod_{j=1}^vq_j^{\beta_j}}$, then we have 
\begin{equation} 
s_{-r}(N)=\prod_{j=1}^vs_{-r}(q_j^{\beta_j})\geq\prod_{j=1}^v(1-q_j^{-r})>\prod_{j=1}^{\infty}(1-p_j^{-r})=\frac{1}{\zeta(r)}. 
\end{equation} 
Hence, for $r>1$, the range of $s_{-r}$ is a subset of the interval $((\zeta(r))^{-1},1]$. We will soon show that, for $r\in(0,1]$, the range of $s_{-r}$ is a dense subset of $(0,1]$. However, we will find that the range of $s_{-2}$ is not dense in $((\zeta(2))^{-1},1]$. Our goal is to find a constant, which we will call $\eta_A$, such that if $r>1$, then the range of $s_{-r}$ is dense in $((\zeta(r))^{-1},1]$ if and only if $r\leq\eta_A$. 
\section{Finding $\eta_A$}  
For the sake of convenience, we introduce a class of functions $L_{-r}$, which we define, for each $r>0$, by $L_{-r}(n)=-\log(s_{-r}(n))$ for all $n\in\mathbb{N}$. Note that the functions $L_{-r}$ take nonnegative values. Furthermore, for any prime $p$, we see that $(L_{-r}(p^{2\alpha+1}))_{\alpha=0}^{\infty}$ forms a decreasing sequence, $(L_{-r}(p^{2\alpha}))_{\alpha=0}^{\infty}$ forms an increasing sequence, and $\displaystyle{\lim_{\alpha\rightarrow\infty}L_{-r}(p^{\alpha})}$ exists (because $\displaystyle{\lim_{\alpha\rightarrow\infty}s_{-r}(p^{\alpha})}$ exists by the Alternating Series test). This motivates us to define an ordering $\succ$ on the nonnegative integers as follows. If $k_1$ and $k_2$ are odd positive integers with $k_1<k_2$, then $k_1\succ k_2$. If $k_1$ and $k_2$ are even nonnegative integers with $k_1<k_2$, then $k_2\succ k_1$. If $k_1$  is an odd positive integer and $k_2$ is an even nonnegative integer, then $k_1\succ k_2$. This ordering has the property that if $r>0$ and $p$ is a prime, then, for any distinct nonnegative integers $k_1$ and $k_2$, $L_{-r}(p^{k_1})>L_{-r}(p^{k_2})$ if and only if $k_1\succ k_2$. 
We are now equipped to prove the following theorem. 
\begin{theorem} \label{Thm2.1} 
If $r\in(0,1]$, then the range of $s_{-r}$ is a dense subset of $(0,1]$. 
\end{theorem}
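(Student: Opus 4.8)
The plan is to pass to the additive picture furnished by the functions $L_{-r}$ and then exhibit a family of small ``building blocks'' whose finite subset sums are dense. Since $x\mapsto-\log x$ is a homeomorphism of $(0,1]$ onto $[0,\infty)$, the range of $s_{-r}$ is dense in $(0,1]$ if and only if the range of $L_{-r}$ is dense in $[0,\infty)$. Because $s_{-r}$ is multiplicative, $L_{-r}$ is additive on coprime arguments, so for a squarefree integer $N=\prod_{i\in S}p_i$ we have $L_{-r}(N)=\sum_{i\in S}L_{-r}(p_i)$. It therefore suffices to show that, for every target $x\geq 0$ and every $\varepsilon>0$, there is a finite set $S$ of indices with $\bigl|\sum_{i\in S}L_{-r}(p_i)-x\bigr|<\varepsilon$.

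I would use only first powers of primes as blocks, writing $a_i=L_{-r}(p_i)=-\log\!\bigl(1-p_i^{-r}\bigr)$. Two properties of this sequence drive everything. First, $a_i\to 0$ as $i\to\infty$, since $p_i^{-r}\to 0$. Second, $\sum_{i=1}^{\infty}a_i=\infty$: from the elementary inequality $-\log(1-y)\geq y$ we get $a_i\geq p_i^{-r}$, and for $r\in(0,1]$ we have $p_i^{-r}\geq p_i^{-1}$, so $\sum_i a_i\geq\sum_i p_i^{-1}=\infty$ by the divergence of the sum of reciprocals of the primes. This divergence is exactly where the hypothesis $r\leq 1$ enters, and it is the genuine content of the theorem; the remaining step is routine once these two facts are in place.

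With the blocks in hand, I would finish by a standard greedy construction. Given $x\geq 0$ and $\varepsilon>0$, choose an index $M$ so large that $a_i<\varepsilon$ for all $i\geq M$; discarding the finitely many earlier terms still leaves $\sum_{i\geq M}a_i=\infty$. Now run through $a_M,a_{M+1},\dots$ in order, maintaining a running sum $\mathcal{S}$ starting at $0$ and including each term precisely when doing so keeps $\mathcal{S}\leq x$, stopping at the first index $m$ for which inclusion would force $\mathcal{S}$ to exceed $x$. Such an $m$ exists because the tail series diverges past $x$. At termination we have $\mathcal{S}\leq x<\mathcal{S}+a_m$ with $a_m<\varepsilon$, whence $x-\varepsilon<\mathcal{S}\leq x$. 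Taking $N$ to be the product of the (finitely many) primes whose blocks were included gives a squarefree $N$ with $L_{-r}(N)=\mathcal{S}$, so $|L_{-r}(N)-x|<\varepsilon$. Hence the range of $L_{-r}$ is dense in $[0,\infty)$, and therefore the range of $s_{-r}$ is dense in $(0,1]$.
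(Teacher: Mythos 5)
Your proposal is correct and follows essentially the same route as the paper: both pass from $s_{-r}$ to $L_{-r}$, take the blocks $L_{-r}(p_i)$ at first powers of primes, and reduce density to the two facts that $L_{-r}(p_i)\to 0$ and $\sum_i L_{-r}(p_i)=\infty$ (you via $-\log(1-y)\geq y$ and $\sum_i p_i^{-1}=\infty$, the paper via $\prod_i(1-p_i^{-r})=0$, which are equivalent). The only difference is that you spell out the greedy subsum construction, which the paper invokes as a standard fact; your version of that step is also correct.
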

\begin{proof} 
We first observe that the range of $s_{-r}$ is dense in $(0,1]$ if and only if the range of $L_{-r}$ is dense in $[0,\infty)$. To show that the range of $L_{-r}$ is dense in $[0,\infty)$, we consider the subsums of the series $\displaystyle{\sum_{i=1}^{\infty}L_{-r}(p_i)}$. We see that any finite subsum of this series, say $\displaystyle{\sum_{j=1}^vL_{-r}(q_j)}$, is within the range of $L_{-r}$ because 
\begin{equation}
\sum_{j=1}^vL_{-r}(q_j)=-\log\left(\prod_{j=1}^vs_{-r}(q_j)\right)=L_{-r}\left(\prod_{j=1}^vq_j\right). 
\end{equation} 
Hence, it suffices to show that $\displaystyle{\sum_{i=1}^{\infty}L_{-r}(p_i)}$ is a divergent series whose terms tend to $0$. First, $\displaystyle{\lim_{i\rightarrow\infty}L_{-r}(p_i)=\lim_{i\rightarrow\infty}(-\log(1-p_i^{-r}))=0}$. Second, we know that $\displaystyle{\sum_{i=1}^{\infty}L_{-r}(p_i)}$ diverges because, for $r\in(0,1]$, we have $\displaystyle{\prod_{i=1}^{\infty}(1-p_i^{-r})=0}$. 
\end{proof} 
Henceforth, we will focus on values of $r$ that are greater than $1$. We seek to establish a necessary and sufficient condition for the range of a function $s_{-r}$ to be dense in $((\zeta(r))^{-1},1]$. First, however, we need two lemmata. 
\begin{lemma} \label{Lem2.1} 
If $r>1$, $m\in\mathbb{N}$, and $w\in\{1,2,\ldots,m\}$, then 
\begin{equation} 
1-p_w^{-r}+p_w^{-2r}\leq 1-p_m^{-r}+p_m^{-2r}.
\end{equation}   
\end{lemma}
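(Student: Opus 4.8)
The plan is to reduce the statement to the monotonicity of a single-variable function and then verify that monotonicity by a short algebraic manipulation. Set $a = p_w^{-r}$ and $b = p_m^{-r}$. Since $w \in \{1,\ldots,m\}$ we have $w \leq m$, and the primes are increasing, so $p_w \leq p_m$ and hence $a \geq b > 0$. The inequality to be proved, $1 - a + a^2 \leq 1 - b + b^2$, thus concerns only these two quantities, and it amounts to showing that the map $y \mapsto 1 - y + y^2$ takes a smaller value at the larger argument $a$ than at the smaller argument $b$.

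First I would recast the target inequality into a factored form. Subtracting the two sides and factoring the difference of quadratics gives $(1 - b + b^2) - (1 - a + a^2) = (a - b) - (a^2 - b^2) = (a-b)\bigl(1 - (a+b)\bigr)$, so the desired inequality $1 - a + a^2 \leq 1 - b + b^2$ is equivalent to $(a - b)(a + b - 1) \leq 0$. Because $a \geq b$, the factor $a - b$ is nonnegative, so it remains only to check that $a + b - 1 \leq 0$, that is, $a + b \leq 1$.

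The key quantitative input — and the only place the hypothesis $r > 1$ enters — is that every prime is at least $2$, so $a = p_w^{-r} \leq 2^{-r}$ and $b = p_m^{-r} \leq 2^{-r}$; since $r > 1$ we have $2^{-r} < 2^{-1} = \tfrac12$, whence $a + b < 1$. This completes the argument, with equality holding precisely when $a = b$, i.e. when $p_w = p_m$. I do not expect any genuine obstacle here: the whole content is the elementary fact that $y \mapsto y^2 - y$ is decreasing on $[0, \tfrac12]$, and every value $p_i^{-r}$ with $r > 1$ lies in that interval. An alternative phrasing would observe that $g(y) = 1 - y + y^2$ has $g'(y) = 2y - 1 < 0$ for $y \in (0, \tfrac12)$, so $g$ is decreasing there; composing with the decreasing map $x \mapsto x^{-r}$ then shows directly that $f(x) = 1 - x^{-r} + x^{-2r}$ is nondecreasing for $x \geq 2$, and evaluating at $p_w \leq p_m$ yields the claim.
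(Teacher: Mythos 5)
Your proof is correct, but it takes a more elementary route than the paper. The paper fixes $r>1$, defines $h(x)=1-x^{-r}+x^{-2r}$, computes $h'(x)=rx^{-r-1}(1-2x^{-r})$, notes $h'(x)>0$ for $x\geq 2$, and concludes from $2\leq p_w\leq p_m$ that $h(p_w)\leq h(p_m)$ --- a one-line calculus argument. You instead substitute $a=p_w^{-r}$, $b=p_m^{-r}$ and factor the difference of the two sides as $(a-b)\bigl(1-(a+b)\bigr)$, reducing everything to the purely algebraic facts that $a\geq b$ and $a+b\leq 2\cdot 2^{-r}<1$. Both arguments pivot on exactly the same numerical input (every $p_i^{-r}$ lies below $\tfrac12$ when $r>1$; this is precisely the paper's factor $1-2x^{-r}>0$), so the mathematical content is identical, but your factorization avoids differentiation entirely, which is arguably cleaner, and it also isolates the equality case ($p_w=p_m$) explicitly. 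Your closing ``alternative phrasing'' --- differentiating $g(y)=1-y+y^2$ and composing with the decreasing map $x\mapsto x^{-r}$ --- is essentially the paper's proof in disguise, so you have in effect produced both arguments.
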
 
\begin{proof} 
Fix some $r>1$. Define $h\colon\mathbb{R}\rightarrow\mathbb{R}$ by $h(x)=1-x^{-r}+x^{-2r}$. Then $h'(x)=rx^{-r-1}(1-2x^{-r})$. If $x\geq 2$, then $h'(x)>0$. As $2\leq p_w\leq p_m$, the result follows. 
\end{proof} 
\begin{lemma} \label{Lem2.2}
Let $r>1$ be a real number, and let $p$ be a prime. For any positive integer $k$, we have $\vert L_{-r}(p^{k+2})-L_{-r}(p^k)\vert<L_{-r}(p^2)$.  
\end{lemma}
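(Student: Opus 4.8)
The plan is to reduce the whole statement to one explicit logarithm and then to a single clean polynomial inequality. I would write $x = p^{-r}$, noting that $0 < x < 1/2$ since $p\geq 2$ and $r>1$. Summing the geometric series gives $s_{-r}(p^{\alpha}) = \frac{1-(-x)^{\alpha+1}}{1+x}$, so the factors of $1+x$ cancel in the ratio and
\[
L_{-r}(p^{k+2}) - L_{-r}(p^{k}) = \log\frac{s_{-r}(p^{k})}{s_{-r}(p^{k+2})} = \log\frac{1-u}{1-ux^{2}},
\]
where $u = (-x)^{k+1}$. Since $k\geq 1$, one has $|u| = x^{k+1}\leq x^{2}$, with $u>0$ when $k$ is odd and $u<0$ when $k$ is even. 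I would also record that $L_{-r}(p^{2}) = -\log(1-x+x^{2})$. After exponentiating, the claim $|L_{-r}(p^{k+2}) - L_{-r}(p^{k})| < L_{-r}(p^{2})$ becomes the two-sided bound
\[
1-x+x^{2} < \frac{1-u}{1-ux^{2}} < \frac{1}{1-x+x^{2}}.
\]

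The upper bound is the easy half. Clearing denominators (legitimate since $1-ux^{2}>0$, as $|u|x^{2}<1$) turns it into $(1-u)(1-x+x^{2}) < 1-ux^{2}$, which after cancellation is exactly $(x-1)(x+u)<0$. This holds because $x-1<0$ while $x+u \geq x-x^{2} = x(1-x) > 0$, using $u\geq -x^{2}$.

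The lower bound is the crux, and it is where the constraint $x<1/2$ is genuinely needed. Clearing denominators gives $(1-x+x^{2})(1-ux^{2}) < 1-u$, which simplifies to $u\,(1-x^{2}+x^{3}-x^{4}) < x-x^{2}$. The coefficient factors as $1-x^{2}+x^{3}-x^{4} = (1-x)(1+x+x^{3}) > 0$, so the inequality is immediate when $u\leq 0$, and when $u>0$ it suffices (since $u\leq x^{2}$) to prove $x^{2}(1-x)(1+x+x^{3}) < x(1-x)$. Dividing by $x(1-x)>0$ reduces this to $x+x^{2}+x^{4}<1$, which is clear for $0<x<1/2$. The main obstacle is thus conceptual rather than computational: a crude bound of $\frac{1-u}{1-ux^{2}}$ by its extreme values in $u$ fails for $x$ near $1$, so the argument really must use that $x=p^{-r}$ is bounded away from $1$, and the factorization of $1-x^{2}+x^{3}-x^{4}$ is what renders the final reduction transparent.
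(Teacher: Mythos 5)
Your proof is correct; every step checks out, including the reduction to $\frac{1-u}{1-ux^{2}}$ with $u=(-x)^{k+1}$, the cross-multiplications (both denominators involved are positive), the factorization $1-x^{2}+x^{3}-x^{4}=(1-x)(1+x+x^{3})$, and the final bound $x+x^{2}+x^{4}<1$ for $0<x<\frac{1}{2}$. It is, however, organized genuinely differently from the paper's proof. The paper splits into cases according to the parity of $k$, invoking the monotonicity of the sequences $(L_{-r}(p^{2\alpha+1}))_{\alpha=0}^{\infty}$ and $(L_{-r}(p^{2\alpha}))_{\alpha=0}^{\infty}$ to resolve the absolute value in each case; it proves the odd case via the inequality $(1-y+y^{2})(1-y^{k+3})<1-y^{k+1}$, established by a chain of term-by-term rearrangements, and then disposes of the even case by the comparison $\frac{1+y^{k+1}}{1+y^{k+3}}<\frac{1-y^{k+3}}{1-y^{k+1}}$, reducing it to the odd case. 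Your signed parameter $u$ collapses that structure: a single two-sided bound $1-x+x^{2}<\frac{1-u}{1-ux^{2}}<\frac{1}{1-x+x^{2}}$ covers both parities at once, and the argument never needs the monotonicity observations, so it is self-contained. The trade-off is that the paper proves only the one inequality that is actually binding in each case, whereas you prove both sides for every $|u|\le x^{2}$; in exchange, your hard direction --- which for $u=x^{k+1}>0$ is literally the paper's odd-case inequality --- gets a cleaner proof via the factorization and the bound $u\le x^{2}$, and your easy direction $(x-1)(x+u)<0$ is a one-line verification. Both arguments ultimately rest on the same two ingredients: the closed form of the geometric sum and the fact that $x=p^{-r}<\frac{1}{2}$.
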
 
\begin{proof} 
For simplicity, we will write $y=p^{-r}$. First, suppose $k$ is odd. Then, because $(L_{-r}(p^{2\alpha+1}))_{\alpha=0}^{\infty}$ is a decreasing sequence, we have 
\[\vert L_{-r}(p^{k+2})-L_{-r}(p^k)\vert=L_{-r}(p^k)-L_{-r}(p^{k+2})\] 
\[=\log\left(\frac{1}{\sum_{j=0}^k (-y)^j}\right)-\log\left(\frac{1}{\sum_{j=0}^{k+2} (-y)^j}\right)\] 
\begin{equation}
=\log\left(\frac{1+y}{1-y^{k+1}}\right)-\log\left(\frac{1+y}{1-y^{k+3}}\right)=\log\left(\frac{1-y^{k+3}}{1-y^{k+1}}\right). 
\end{equation}
Because $\displaystyle{L_{-r}(p^2)=\log\left(\frac{1}{1-y+y^2}\right)}$, we see that we simply need to show that $\displaystyle{\frac{1-y^{k+3}}{1-y^{k+1}}<\frac{1}{1-y+y^2}}$. 
\par 
Noting that $0<y<\frac{1}{2}$, we have $y^k<y$ and $y^{k+3}<y^{k+2}$. Therefore, $y+y^k+y^{k+3}<2y+y^{k+2}+y^{k+4}<1+y^{k+2}+y^{k+4}$, so we have $y^2+y^{k+1}+y^{k+4}<y+y^{k+3}+y^{k+5}$. After adding $1$ to each side and rearranging terms, we get $1-y+y^2-y^{k+3}+y^{k+4}-y^{k+5}<1-y^{k+1}$, which we may write as $(1-y+y^2)(1-y^{k+3})<1-y^{k+1}$. Hence, $\displaystyle{\frac{1-y^{k+3}}{1-y^{k+1}}<\frac{1}{1-y+y^2}}$, so we have completed the proof for the case in which $k$ is odd. 
\par 
Now, suppose that $k$ is even. Then, because $(L_{-r}(p^{2\alpha}))_{\alpha=0}^{\infty}$ is an increasing sequence, we have 
\[\vert L_{-r}(p^{k+2})-L_{-r}(p^k)\vert=L_{-r}(p^{k+2})-L_{-r}(p^k)\] 
\[=\log\left(\frac{1}{\sum_{j=0}^{k+2} (-y)^j}\right)-\log\left(\frac{1}{\sum_{j=0}^k (-y)^j}\right)\] 
\begin{equation}
=\log\left(\frac{1+y}{1+y^{k+3}}\right)-\log\left(\frac{1+y}{1+y^{k+1}}\right)=\log\left(\frac{1+y^{k+1}}{1+y^{k+3}}\right). 
\end{equation}
Again, we have $\displaystyle{L_{-r}(p^2)=\log\left(\frac{1}{1-y+y^2}\right)}$, so it suffices to show that \\
$\displaystyle{\frac{1+y^{k+1}}{1+y^{k+3}}<\frac{1}{1-y+y^2}}$.
Because $0<y<\frac{1}{2}$, we have $1-y^{2(k+1)}<1-y^{2(k+3)}$. Therefore, $\displaystyle{\frac{1+y^{k+1}}{1+y^{k+3}}<\frac{1-y^{k+3}}{1-y^{k+1}}}$, and we have already shown that $\displaystyle{\frac{1-y^{k+3}}{1-y^{k+1}}<\frac{1}{1-y+y^2}}$. 
\end{proof} 
\begin{theorem} \label{Thm2.2} 
If $r>1$, then the range of $s_{-r}$ is dense in the interval \\ 
$((\zeta(r))^{-1},1]$ if and only if $\displaystyle{s_{-r}(p_m^2)\geq\prod_{i=m+1}^{\infty}s_{-r}(p_i)}$ for all positive integers $m$. 
\end{theorem}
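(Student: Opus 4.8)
The plan is to transfer everything to the additive functions $L_{-r}$. Since $x\mapsto-\log x$ is a decreasing homeomorphism of $((\zeta(r))^{-1},1]$ onto $[0,\log\zeta(r))$, the range of $s_{-r}$ is dense in $((\zeta(r))^{-1},1]$ if and only if the range of $L_{-r}$ is dense in $[0,\log\zeta(r))$. Writing $T_m=\sum_{i=m+1}^{\infty}L_{-r}(p_i)$, one has $T_0=-\log\prod_{i=1}^{\infty}(1-p_i^{-r})=\log\zeta(r)$, and, after applying $-\log$, the condition $s_{-r}(p_m^2)\ge\prod_{i=m+1}^{\infty}s_{-r}(p_i)$ becomes simply $L_{-r}(p_m^2)\le T_m$. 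Every element of the range of $L_{-r}$ is a sum $\sum_i L_{-r}(p_i^{k_i})$ with only finitely many $k_i\neq 0$, and I would repeatedly split such a sum according to whether $p_i\le p_m$ or $p_i>p_m$. Before treating either implication I would record two structural facts about the one-prime value set $C_m=\{L_{-r}(p_m^k):k\ge 0\}$: by the ordering $\succ$ its least positive element is $L_{-r}(p_m^2)$, and by Lemma~\ref{Lem2.2}, together with the monotonicity and common limit of the sequences $(L_{-r}(p_m^{2\alpha}))_\alpha$ and $(L_{-r}(p_m^{2\alpha+1}))_\alpha$, every gap between consecutive elements of $C_m$ is at most $L_{-r}(p_m^2)$, the largest such gap being the one from $0$ to $L_{-r}(p_m^2)$. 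Finally, Lemma~\ref{Lem2.1} shows $L_{-r}(p_m^2)\le L_{-r}(p_w^2)$ for $w\le m$, so $L_{-r}(p_m^2)$ is in fact the least positive value of any sum $\sum_{i\le m}L_{-r}(p_i^{k_i})$.

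For the necessity of the condition I would argue by contraposition. Suppose $L_{-r}(p_M^2)>T_M$ for some $M$; I claim the interval $(T_M,L_{-r}(p_M^2))$ contains no value of $L_{-r}$. Given $N$, write $L_{-r}(N)=\sigma+\beta$, where $\sigma=\sum_{p_i\le p_M}L_{-r}(p_i^{k_i})$ and $\beta=\sum_{p_i>p_M}L_{-r}(p_i^{k_i})$. Since $L_{-r}(p_i^{k_i})\le L_{-r}(p_i)$ always, we have $\beta\le T_M$; and by the last fact above, $\sigma$ is either $0$ or at least $L_{-r}(p_M^2)$. Hence if $\sigma=0$ then $L_{-r}(N)=\beta\le T_M$, while if $\sigma>0$ then $L_{-r}(N)\ge L_{-r}(p_M^2)$. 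Either way $L_{-r}(N)\notin(T_M,L_{-r}(p_M^2))$, so the range of $L_{-r}$ misses this interval and cannot be dense.

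For sufficiency I would assume $L_{-r}(p_m^2)\le T_m$ for every $m$ and approximate an arbitrary $x\in[0,\log\zeta(r))$. Fixing $\epsilon>0$, choose $M$ with $T_M<\epsilon$ (possible since $T_m\to 0$) and build $N=\prod_{i=1}^{M}p_i^{k_i}$ greedily, maintaining the invariant that the partial sum $\sigma_m=\sum_{i\le m}L_{-r}(p_i^{k_i})$ satisfies $\sigma_m\le x$ and $x-\sigma_m\le T_m$; this holds at $m=0$ because $0\le x\le T_0$. At step $m$ the remaining deficit $\delta=x-\sigma_{m-1}$ lies in $[0,T_{m-1}]=[0,L_{-r}(p_m)+T_m]$, and I must pick $L_{-r}(p_m^{k_m})\in C_m$ inside the length-$T_m$ window $[\delta-T_m,\delta]$. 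If $\delta>L_{-r}(p_m)$ I take $k_m=1$, the maximal element of $C_m$; if $\delta\le L_{-r}(p_m)$ I take the largest element of $C_m$ not exceeding $\delta$, which lies within $T_m$ of $\delta$ precisely because the gaps of $C_m$ are bounded by $L_{-r}(p_m^2)\le T_m$. In both cases the invariant survives, so after $M$ steps $\lvert x-L_{-r}(N)\rvert=x-\sigma_M\le T_M<\epsilon$, giving density.

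The hard part is the interplay, at the single threshold $L_{-r}(p_m^2)$ versus $T_m$, between the two lemmata. Lemma~\ref{Lem2.2} (with the ordering $\succ$) is exactly what forces the maximal gap of $C_m$ to be $L_{-r}(p_m^2)$, so that the greedy window can always be filled in the sufficiency direction, while Lemma~\ref{Lem2.1} is exactly what makes $L_{-r}(p_M^2)$ the least positive contribution of the small primes, so that the forbidden interval $(T_M,L_{-r}(p_M^2))$ really is empty in the necessity direction. I expect the most delicate bookkeeping to be verifying that the greedy window $[\delta-T_m,\delta]$ always meets $C_m$ (including near the accumulation point $\log(1+p_m^{-r})$), which is the one place where the gap estimate enters essentially.
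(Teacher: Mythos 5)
Your proposal is correct, and its necessity half is the paper's own argument transported through $-\log$: your forbidden interval $(T_M,L_{-r}(p_M^2))$ is exactly the image of the paper's interval $\bigl(s_{-r}(p_M^2),\prod_{i=M+1}^{\infty}s_{-r}(p_i)\bigr)$, with Lemma~\ref{Lem2.1} playing the identical role of making the small-prime contribution at least $L_{-r}(p_M^2)$. The sufficiency half, though, is organized genuinely differently. The paper runs the greedy construction over \emph{all} primes, obtaining an infinite monotone sequence $(C_n)$ with limit $\gamma\le x$, and then argues by contradiction when $\gamma<x$: it takes the least $m$ with $E_m>\log\zeta(r)-x$, rules out $\alpha_m=1$, bounds the overshoot $A_m\le L_{-r}(p_m^2)$ via Lemma~\ref{Lem2.2}, and contradicts minimality; it also needs a separate terminal case for when some $C_{n-1}+\lim_{k\to\infty}L_{-r}(p_n^k)$ equals $x$ exactly. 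You instead truncate at a single $M$ with $T_M<\epsilon$ and push the invariant $0\le x-\sigma_m\le T_m$ forward through finitely many steps, so no contradiction and no separate exact-hit case arise, and the hypothesis $L_{-r}(p_m^2)\le T_m$ enters in exactly one visible place: the gap-versus-tail comparison that lets the window $[\delta-T_m,\delta]$ meet $C_m$. This is arguably cleaner and isolates what the density condition is really for. What the paper's formulation buys in exchange is that each greedy step is canonical (the $\succ$-largest admissible exponent), so it never has to analyze the value set $C_m$ as a subset of $\mathbb{R}$ with its gaps and accumulation point.

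One repair is needed in your Case 2: ``the largest element of $C_m$ not exceeding $\delta$'' need not exist, namely when $\delta$ equals the accumulation point $\log(1+p_m^{-r})$, since the admissible elements are then the even-power values, which have no maximum. What you actually need (and essentially prove) is only that $C_m\cap[\delta-T_m,\delta]\neq\emptyset$: either $\delta$ lies in the closure $C_m\cup\{\log(1+p_m^{-r})\}$ or in one of its gaps, each of which has length at most $L_{-r}(p_m^2)\le T_m$ by Lemma~\ref{Lem2.2} and the ordering $\succ$, and in the accumulation-point case the even powers approach $\delta$ from below. Since you flag exactly this point yourself, this is a matter of phrasing rather than a gap in substance.
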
   
\begin{proof} 
First, suppose there exists some positive integer $m$ such that \\
$\displaystyle{s_{-r}(p_m^2)<\prod_{i=m+1}^{\infty}s_{-r}(p_i)}$. Let $N$ be an arbitrary positive integer with canonical prime factorization $\displaystyle{N=\prod_{j=1}^v q_j^{\beta_j}}$. If $p_w\vert N$ for some $w\in\{1,2,\ldots,m\}$, then $s_{-r}(N)\leq 1-p_w^{-r}+p_w^{-2r}$. By Lemma \ref{Lem2.1}, we see that $s_{-r}(N)\leq 1-p_m^{-r}+p_m^{-2r}=s_{-r}(p_m^2)$. On the other hand, if $p_w\nmid N$ for all $w\in\{1,2,\ldots,m\}$, then 
\begin{equation}
s_{-r}(N)=s_{-r}\left(\prod_{j=1}^v q_j^{\beta_j}\right)=\prod_{j=1}^v s_{-r}(q_j^{\beta_j})\geq\prod_{j=1}^v s_{-r}(q_j)>\prod_{i=m+1}^{\infty} s_{-r}(p_i). 
\end{equation} 
This shows that there is no element of the range of $s_{-r}$ in the interval \\ 
$\displaystyle{\left(s_{-r}(p_m^2),\prod_{i=m+1}^{\infty}s_{-r}(p_i)\right)}$, so the range of $s_{-r}$ is not dense in $((\zeta(r))^{-1},1]$. 
\par 
To prove the converse, let us suppose that $\displaystyle{s_{-r}(p_m^2)\geq\prod_{i=m+1}^{\infty}s_{-r}(p_i)}$ for all positive integers $m$. We will show that the range of $L_{-r}$ is dense in $[0,\log(\zeta(r)))$, which will prove that the range of $s_{-r}$ is dense in $((\zeta(r))^{-1},1]$. Choose some arbitrary $x\in(0,\log(\zeta(r)))$. We will construct a sequence $(C_n)_{n=1}^{\infty}$ of elements of the range of $L_{-r}$ such that $\displaystyle{\lim_{n\rightarrow\infty}C_n}=x$. First, define $C_0=0$. Now, recall the ordering $\succ$ that we defined at the beginning of this section. We will say that a nonnegative integer $k_1$ is larger than a nonnegative integer $k_2$ with respect to the ordering $\succ$ if and only if $k_1\succ k_2$. Let $n$ be a positive integer. We will ensure by construction that $C_{n-1}\leq x$. If $\displaystyle{C_{n-1}+\lim_{k\rightarrow\infty}L_{-r}(p_n^k)=x}$, then we will define $\alpha_n=-1$. If $\displaystyle{C_{n-1}+\lim_{k\rightarrow\infty}L_{-r}(p_n^k)\neq x}$, then we will define $\alpha_n$ to be the nonnegative integer satisfying $C_{n-1}+L_{-r}(p_n^{\alpha_n})\leq x$ that is largest with respect to the ordering $\succ$. In this case, we define $C_n=C_{n-1}+L_{-r}(p_n^{\alpha_n})$. For now, let us assume that $x$ is such that $\displaystyle{C_{n-1}+\lim_{k\rightarrow\infty}L_{-r}(p_n^k)\neq x}$ for all positive integers $n$. In other words, $\alpha_n\geq 0$ and $C_n$ is defined for all positive integers $n$.  \par 
We first show that $C_n$ is in the range of $L_{-r}$ for all positive integers $n$. Indeed, we have 
\begin{equation}
C_n=\sum_{i=1}^n L_{-r}(p_i^{\alpha_i})=L_{-r}\left(\prod_{i=1}^n p_i^{\alpha_i}\right).
\end{equation}
Now, we defined $(C_n)_{n=1}^{\infty}$ to be a monotonic sequence with the property that $C_n\leq x$ for all $n\in\mathbb{N}$, so we may write $\displaystyle{\lim_{n\rightarrow\infty}C_n=\gamma\leq x}$. Suppose, for the sake of finding a contradiction, that $\gamma<x$.  
For each $n\in\mathbb{N}$, we will let $D_n=L_{-r}(p_n)-L_{-r}(p_n^{\alpha_n})$ and $\displaystyle{E_n=\sum_{i=1}^n D_i}$. 
Then $\displaystyle{C_n+E_n=\sum_{i=1}^n L_{-r}(p_n)}$, so $\displaystyle{\lim_{n\rightarrow\infty}(C_n+E_n)=\lim_{n\rightarrow\infty}\left(-\log\left(\prod_{i=1}^n s_{-r}(p_i)\right)\right)=\log(\zeta(r))}$. Therefore, \\ 
$\displaystyle{\lim_{n\rightarrow\infty}E_n=\log(\zeta(r))-\gamma>\log(\zeta(r))-x}$, 
so we may let $m$ be the smallest positive integer such that $E_m>\log(\zeta(r))-x$. If $\alpha_m=1$ and $m>1$, then $D_m=0$, implying that $E_{m-1}=E_m>\log(\zeta(r))-x$, which contradicts the minimality of $m$. On the other hand, if $\alpha_m=1$ and $m=1$, then $E_m=0>\log(\zeta(r))-x$, which is also a contradiction. Hence, $\alpha_m\neq 1$. If $\alpha_m$ is odd, then we will let $A_m=L_{-r}(p_m^{\alpha_m-2})-L_{-r}(p_m^{\alpha_m})$. In this case, we see, by the definitions of $C_m$ and $\alpha_m$ and the fact that $\alpha_{m-2}\succ\alpha_m$, that $A_m+C_m>x$. If, on the other hand, $\alpha_m$ is even, then we may write $A_m=L_{-r}(p_m^{\alpha_m+2})-L_{-r}(p_m^{\alpha_m})$. Again, by the definitions of $C_m$ and $\alpha_m$ and the fact that $\alpha_{m+2}\succ\alpha_m$, we have $A_m+C_m>x$. No matter the parity of $\alpha_m$, we have $x-C_m<A_m$. Using Lemma \ref{Lem2.2}, we see that $A_m\leq L_{-r}(p_m^2)$, so $x-C_m<L_{-r}(p_m^2)=-\log(s_{-r}(p_m^2))$. As we originally assumed that $\displaystyle{s_{-r}(p_m^2)\geq\prod_{i=m+1}^{\infty}s_{-r}(p_i)}$, we have 
\[x-C_m<-\log(s_{-r}(p_m^2))\leq -\log\left(\prod_{i=m+1}^{\infty}s_{-r}(p_i)\right)\] 
\begin{equation}
=\log(\zeta(r))-(C_m+E_m).
\end{equation} 
This implies that $E_m<\log(\zeta(r))-x$, which is our desired contradiction. This completes the proof of the case in which $\alpha_n\geq 0$ for all $n\in\mathbb{N}$. 
\par 
Finally, let us assume that there is some positive integer $n$ such that \\ 
$\displaystyle{C_{n-1}+\lim_{k\rightarrow\infty}L_{-r}(p_n^k)=x}$. In this case, simply let $C_{n-1+j}=C_{n-1}+L_{-r}(p_n^j)$ for all positive integers $j$. Then, as before, we see that $C_{n-1+j}$ is always in the range of $L_{-r}$. Furthermore, $\displaystyle{\lim_{j\rightarrow\infty}C_{n-1+j}=C_{n-1}+\lim_{j\rightarrow\infty}L_{-r}(p_n^j)=x}$. This completes the proof.  
\end{proof} 
We now have a way to test whether or not the range of $s_{-r}$ is dense in $((\zeta(r))^{-1},1]$ for a given $r>1$. However, after a short lemma, we will be able to simplify the problem even further. 
\begin{lemma} \label{Lem2.3}
If $j\in\mathbb{N}\backslash\{1,2,4\}$, then $\displaystyle{\frac{p_{j+1}}{p_j}<\sqrt{2}}$. 
\end{lemma}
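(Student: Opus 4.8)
The plan is to prove the equivalent inequality $p_{j+1}^2 < 2p_j^2$ (equivalent since all quantities are positive), splitting the range of $j$ into a finite list of small cases handled by direct computation and an infinite tail handled by an explicit refinement of Bertrand's postulate. The point to notice at the outset is that Bertrand's postulate alone only yields $p_{j+1} < 2p_j$, a ratio of $2$ rather than $\sqrt{2}$, so it is hopelessly weak here; the real content of the lemma is a sharper guarantee that a prime always appears in a short interval just above $p_j$.

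For the tail I would appeal to Nagura's theorem, which states that for every integer $n \geq 25$ the interval $\left(n, \tfrac{6}{5}n\right)$ contains a prime. Applying this with $n = p_j$ whenever $p_j \geq 25$ — that is, whenever $j \geq 10$, since $p_{10} = 29$ while $p_9 = 23$ — produces a prime strictly between $p_j$ and $\tfrac{6}{5}p_j$, which forces $p_{j+1} < \tfrac{6}{5}p_j$. As $\tfrac{6}{5} = 1.2 < \sqrt{2}$, the desired inequality holds for all $j \geq 10$.

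It then remains to dispose of the finitely many cases $j \in \{3, 5, 6, 7, 8, 9\}$, the values $1, 2, 4$ being deliberately excluded. For each of these I would simply compute the ratio $p_{j+1}/p_j$ and compare it with $\sqrt{2} \approx 1.41421$: for instance $p_6/p_5 = 13/11 \approx 1.18$, $p_7/p_6 = 17/13 \approx 1.31$, $p_9/p_8 = 23/19 \approx 1.21$, and so on, each comfortably below $\sqrt{2}$.

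The main obstacle — and the only genuinely nonelementary ingredient — is this tail estimate: since Bertrand's postulate is too weak, one must invoke an explicit short-interval prime-existence result such as Nagura's (or any comparable bound derived from effective estimates on $\pi(x)$). A secondary point to watch is the case $j = 3$, where $p_4/p_3 = 7/5 = 1.4$ sits only just below $\sqrt{2}$; this narrow margin is precisely why $j = 3$ satisfies the inequality while its neighbors $1, 2, 4$ do not, so this verification should be recorded carefully rather than waved through.
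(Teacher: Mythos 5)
Your proof is correct, but it takes a genuinely different route from the paper's. The paper obtains its tail bound from the corollary to Theorem 3 of Rosser and Schoenfeld: their estimates on the $j$-th prime yield $\frac{p_{j+1}}{p_j}<\frac{(j+1)(\log(j+1)+\log\log(j+1))}{j\log j}$ for $j\geq 6$, and this expression only drops below $\sqrt{2}$ once $j\geq 32$, so the paper must finish with a direct check of all ratios for $j<32$. Your appeal to Nagura's theorem (a prime in $\left(n,\frac{6}{5}n\right)$ for every $n\geq 25$) gives the fixed ratio bound $\frac{6}{5}<\sqrt{2}$ already for $j\geq 10$, since $p_{10}=29\geq 25$, shrinking the finite verification to the six cases $j\in\{3,5,6,7,8,9\}$, each of which you dispose of correctly; you also rightly single out $j=3$, where $\frac{7}{5}=1.4$ (equivalently $49<50$) is the only genuinely tight case and the reason the excluded indices $1,2,4$ fail while $3$ does not. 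The two approaches are of comparable depth — both lean on one nonelementary, explicit prime-distribution result — but they trade differently: the paper's Rosser--Schoenfeld route uses general-purpose bounds on $p_n$ at the cost of a ratio estimate that converges to $1$ slowly, hence a larger finite check, while your short-interval result is tailor-made for exactly this kind of statement and yields a cleaner constant with far less residual computation.
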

\begin{proof}
A simple manipulation of the corollary to Theorem 3 in \cite{Rosser61} shows that $\displaystyle{\frac{p_{j+1}}{p_j}<\frac{(j+1)(\log(j+1)+\log\log(j+1))}{j\log j}}$ for all integers $j\geq 6$. It is easy to verify that $\displaystyle{\frac{(j+1)(\log(j+1)+\log\log(j+1))}{j\log j}}<\sqrt{2}$ for all $j\geq 32$. Therefore, the desired result holds for $j\geq 32$. A quick search through the values of $\displaystyle{\frac{p_{j+1}}{p_j}}$ for $j<32$ yields the desired result.   
\end{proof} 
\begin{theorem} \label{Thm2.3} 
If $1<r\leq 2$, then the range of $s_{-r}$ is dense in the interval $((\zeta(r))^{-1},1]$ if and only if $\displaystyle{s_{-r}(p_m^2)\geq\prod_{i=m+1}^{\infty}s_{-r}(p_i)}$ for all $m\in\{1,2,4\}$. 
\end{theorem}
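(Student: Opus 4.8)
The plan is to feed the hypotheses into Theorem \ref{Thm2.2} via Lemma \ref{Lem2.3}. One direction is immediate: if the range of $s_{-r}$ is dense then Theorem \ref{Thm2.2} gives $s_{-r}(p_m^2)\geq\prod_{i=m+1}^{\infty}s_{-r}(p_i)$ for \emph{every} positive integer $m$, so in particular for $m\in\{1,2,4\}$. The substance is the converse: assuming $1<r\leq 2$ and the three inequalities at $m\in\{1,2,4\}$, I want to recover the full hypothesis of Theorem \ref{Thm2.2}, namely that the inequality holds for all $m\in\mathbb{N}$, after which density follows. To manage the bookkeeping I would set $T_m=\prod_{i=m+1}^{\infty}s_{-r}(p_i)$ and $f(m)=s_{-r}(p_m^2)-T_m$, so that the condition at index $m$ is exactly $f(m)\geq 0$. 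Since $T_m=s_{-r}(p_{m+1})\,T_{m+1}$, a one-line rearrangement produces the recursion
\[ f(m)=g(m)+s_{-r}(p_{m+1})\,f(m+1),\quad\text{where }\ g(m)=s_{-r}(p_m^2)-s_{-r}(p_{m+1})\,s_{-r}(p_{m+1}^2). \]

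The crux is to show that $g(m)>0$ whenever $m\notin\{1,2,4\}$. Writing $a=p_m^{-r}$ and $b=p_{m+1}^{-r}$, Lemma \ref{Lem2.3} gives $p_{m+1}/p_m<\sqrt 2$ for such $m$, and since $r\leq 2$ this yields $(p_{m+1}/p_m)^r<2$, that is, $b>a/2$. Expanding, $g(m)=a^2-a+b^3-2b^2+2b$, which is increasing in $b$ (its $b$-derivative $3b^2-4b+2$ has negative discriminant), so over the range $b\geq a/2$ it is minimized at $b=a/2$, where it equals $a^2/2+a^3/8>0$. I expect this to be the main obstacle, both because it is where the hypothesis $r\leq 2$ and Lemma \ref{Lem2.3} are consumed and because it depends on the right reduction to the single inequality $b>a/2$.

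With $g(m)>0$ secured for all $m\notin\{1,2,4\}$, the rest is mechanical. Iterating the recursion downward shows that whenever $g(j)\geq 0$ for all $j\geq m$ one has $f(m)\geq\left(\prod_{j=m+1}^{m+K}s_{-r}(p_j)\right)f(m+K)$ for every $K$; letting $K\to\infty$ and using that $\prod_{j>m}s_{-r}(p_j)=T_m$ converges while $f(m+K)\to 0$ (since $s_{-r}(p_n^2)\to 1$ and $T_n\to 1$), I conclude $f(m)\geq 0$. Applying this with $m\geq 5$, where every $g(j)$ with $j\geq 5$ is positive, gives $f(m)\geq 0$ for all $m\geq 5$ without using any hypothesis. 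Next, since $g(3)>0$ and $f(4)\geq 0$ by hypothesis, the recursion at $m=3$ gives $f(3)=g(3)+s_{-r}(p_4)\,f(4)>0$. Combining this with the assumed inequalities at $m\in\{1,2,4\}$ yields $f(m)\geq 0$ for every positive integer $m$, and Theorem \ref{Thm2.2} then delivers the density of the range of $s_{-r}$ in $((\zeta(r))^{-1},1]$.
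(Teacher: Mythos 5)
Your proof is correct and follows essentially the same route as the paper: the paper tracks $F(m,r)=s_{-r}(p_m^2)\prod_{i=1}^m s_{-r}(p_i)$ and proves $F(m+1,r)<F(m,r)$ for $m\notin\{1,2,4\}$, which is exactly your claim $g(m)>0$ after multiplying through by $\prod_{i=1}^m s_{-r}(p_i)$, and both arguments consume Lemma \ref{Lem2.3} together with $r\leq 2$ via the same inequality $2p_{m+1}^{-r}>p_m^{-r}$. Your telescoped recursion with $f(m+K)\to 0$ is the additive counterpart of the paper's observation that the decreasing sequence $(F(m,r))_{m\geq 5}$ with limit $(\zeta(r))^{-1}$ stays above $(\zeta(r))^{-1}$, so the only real differences are bookkeeping and your calculus verification of $g(m)>0$ (monotonicity in $b$, evaluation at $b=a/2$) in place of the paper's algebraic chain.
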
 
\begin{proof} 
Let us define a function $F$ by $\displaystyle{F(m,r)=s_{-r}(p_m^2)\prod_{i=1}^m s_{-r}(p_i)}$ so that the inequality $\displaystyle{s_{-r}(p_m^2)\geq\prod_{i=m+1}^{\infty}s_{-r}(p_i)}$ is equivalent to $F(m,r)\geq(\zeta(r))^{-1}$.  
Due to the validity of Theorem \ref{Thm2.2}, we see that, in order to prove the result, it suffices to show that if $F(m,r)\geq(\zeta(r))^{-1}$ for all $m\in\{1,2,4\}$, then $F(m,r)\geq(\zeta(r))^{-1}$ for all $m\in\mathbb{N}$. Therefore, let us assume that $r\in(1,2]$ is such that
$F(m,r)\geq(\zeta(r))^{-1}$ for all $m\in\{1,2,4\}$. 
\par 
If $m\in\mathbb{N}\backslash\{1,2,4\}$, then Lemma \ref{Lem2.3} tells us that $p_{m+1}<\sqrt{2}p_m\leq\sqrt[r]{2}p_m$, which means that we may write $2p_{m+1}^{-r}>p_m^{-r}$. As $p_m^{-r}-1$ is negative, we have $2p_{m+1}^{-r}(p_m^{-r}-1)<p_m^{-r}(p_m^{-r}-1)$, so we may write 
$-2p_{m+1}^{-r}+2p_{m+1}^{-2r}=2p_{m+1}^{-r}(p_{m+1}^{-r}-1)<2p_{m+1}^{-r}(p_m^{-r}-1)<p_m^{-r}(p_m^{-r}-1)=-p_m^{-r}+p_m^{-2r}$. Therefore, 
\[F(m+1,r)=s_{-r}(p_{m+1}^2)s_{-r}(p_{m+1})\prod_{i=1}^m s_{-r}(p_i)\] 
\[=(1-p_{m+1}^{-r}+p_{m+1}^{-2r})(1-p_{m+1}^{-r})\prod_{i=1}^m s_{-r}(p_i)\] 
\[=(1-2p_{m+1}^{-r}+2p_{m+1}^{-2r}-p_{m+1}^{-3r})\prod_{i=1}^m s_{-r}(p_i)<(1-2p_{m+1}^{-r}+2p_{m+1}^{-2r})\prod_{i=1}^m s_{-r}(p_i)\] 
\begin{equation}
<(1-p_m^{-r}+p_m^{-2r})\prod_{i=1}^m s_{-r}(p_i)=s_{-r}(p_m^2)\prod_{i=1}^m s_{-r}(p_i)=F(m,r). 
\end{equation}
Thus, if $m\in\mathbb{N}\backslash\{1,2,4\}$, then $F(m+1,r)<F(m,r)$. This means that $F(3,r)>F(4,r)\geq(\zeta(r))^{-1}$. Furthermore, $F(m,r)>(\zeta(r))^{-1}$ for all integers $m\geq 5$ because $(F(m,r))_{m=5}^{\infty}$ is a decreasing sequence and 
$\displaystyle{\lim_{m\rightarrow\infty}F(m,r)=(\zeta(r))^{-1}}$. 
\end{proof} 
Using Mathematica 9.0, we may plot the graphs of $(\zeta(r))^{-1}$, $F(1,r)$, $F(2,r)$, and $F(4,r)$. Doing so, we find that the graphs of $F(2,r)$ and $(\zeta(r))^{-1}$ intersect at a point $r_0\approx1.9011618$. Furthermore, we see 
that if $r\in(1,r_0]$, then $F(m,r)\geq(\zeta(r))^{-1}$ for all $m\in\{1,2,4\}$. Therefore, if $r\in(1,r_0]$, then 
Theorem \ref{Thm2.3} tells us that the range of $s_{-r}$ is dense in the interval $((\zeta(r))^{-1},1]$. One may also verify 
that $F(2,r)<(\zeta(r))^{-1}$ for all $r\in(r_0,3.2)$, so the range of $s_{-r}$ is not dense in $((\zeta(r))^{-1},1]$ whenever $r\in(r_0,3.2)$. This leads us to our final theorem. 
\begin{theorem} 
Let $\eta_A$ be the unique number in the interval $(1,2)$ that satisfies the equation 
\begin{equation}
(1-2^{-\eta_A})(1-3^{-\eta_A})(1-3^{-\eta_A}+3^{-2\eta_A})=\frac{1}{\zeta(\eta_A)}. 
\end{equation} 
If $r>1$, then the range of the function $s_{-r}$ is dense in the interval $\displaystyle{\left(\frac{1}{\zeta(r)},1\right]}$ if and only if $r\leq \eta_A$. 
\end{theorem}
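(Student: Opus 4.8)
The plan is to recognize the displayed equation as the statement $F(2,r)=(\zeta(r))^{-1}$, where $F$ is the function from Theorem \ref{Thm2.3}: since $p_1=2$ and $p_2=3$, we have $F(2,r)=s_{-r}(3^2)s_{-r}(2)s_{-r}(3)=(1-3^{-r}+3^{-2r})(1-2^{-r})(1-3^{-r})$, which is exactly the left side of the defining equation. Thus $\eta_A$ is precisely the intersection abscissa $r_0$ of the curves $F(2,r)$ and $(\zeta(r))^{-1}$ discussed above. First I would establish existence and uniqueness of this root in $(1,2)$. Writing $g(r)=F(2,r)-(\zeta(r))^{-1}$, one checks that $g(r)>0$ for $r$ near $1$ (because $(\zeta(r))^{-1}\to0$ as $r\to1^{+}$ while $F(2,r)\to\frac{7}{9}\cdot\frac{1}{3}=\frac{7}{27}>0$) and that $g(2)<0$ (since $F(2,2)=\frac{73}{81}\cdot\frac34\cdot\frac89<\frac{6}{\pi^2}=(\zeta(2))^{-1}$), so the intermediate value theorem produces a root $\eta_A\in(1,2)$. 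Uniqueness follows from the sign pattern recorded above via the Mathematica computation: $F(2,r)\ge(\zeta(r))^{-1}$ exactly on $(1,r_0]$ and $F(2,r)<(\zeta(r))^{-1}$ on $(r_0,3.2)$, so $g$ changes sign only once in $(1,2)$.

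For the forward (``if'') implication, suppose $1<r\le\eta_A$. Since $\eta_A=r_0<2$, we have $1<r\le2$, so Theorem \ref{Thm2.3} applies: the range of $s_{-r}$ is dense in $((\zeta(r))^{-1},1]$ as soon as $F(m,r)\ge(\zeta(r))^{-1}$ for every $m\in\{1,2,4\}$. Exactly this was observed to hold for all $r\in(1,r_0]$ when plotting $F(1,r)$, $F(2,r)$, and $F(4,r)$ against $(\zeta(r))^{-1}$, so the conclusion is immediate.

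For the reverse (``only if'') implication I would split into two ranges. If $\eta_A<r\le2$, then $r\in(r_0,3.2)$, so $F(2,r)<(\zeta(r))^{-1}$; the iff-condition of Theorem \ref{Thm2.3} fails at $m=2$, and hence the range of $s_{-r}$ is not dense. If instead $r>2$, Theorem \ref{Thm2.3} no longer applies, so I would invoke Theorem \ref{Thm2.2} directly: it suffices to exhibit a single $m$ with $s_{-r}(p_m^2)<\prod_{i=m+1}^{\infty}s_{-r}(p_i)$, equivalently $F(m,r)<(\zeta(r))^{-1}$. Taking $m=2$, the task reduces to proving
\[
1-3^{-r}+3^{-2r}<\prod_{i=3}^{\infty}\left(1-p_i^{-r}\right)\qquad\text{for all }r>2.
\]

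This last inequality is the main obstacle, since the finite Mathematica check only certifies $(r_0,3.2)$ and the tail $r\ge3.2$ must be handled analytically. The plan is to take logarithms and bound the two sides. On the right, using $\log(1-x)\ge -x/(1-x)$ together with $p_i^{-r}\le5^{-r}$ for $i\ge3$ gives $\sum_{i=3}^{\infty}\log(1-p_i^{-r})\ge-\frac{1}{1-5^{-r}}\sum_{p\ge5}p^{-r}$, and the prime sum is dominated by $5^{-r}+7^{-r}+\sum_{n\ge11}n^{-r}$, each term being a small multiple of $3^{-r}$ once $r\ge3.2$ (for instance $5^{-r}\le(3/5)^{3.2}\,3^{-r}$, with an integral bound controlling the tail $\sum_{n\ge11}n^{-r}$); this yields a bound of the form $\sum_{i=3}^{\infty}\log(1-p_i^{-r})\ge -c\,3^{-r}$ with $c<\tfrac12$. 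On the left, $\log(1+u)\le u$ with $u=-3^{-r}+3^{-2r}$ gives
\[
\log(1-3^{-r}+3^{-2r})\le -3^{-r}(1-3^{-r})\le -(1-3^{-3.2})\,3^{-r}.
\]
Since $1-3^{-3.2}>c$, the left side is strictly more negative than the right for every $r\ge3.2$, which establishes the displayed inequality and disposes of the case $r>2$. Assembling the three ranges yields the stated equivalence.
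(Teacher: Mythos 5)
Your proposal is correct, and its skeleton matches the paper's: identify $\eta_A$ with the intersection point $r_0$ of $F(2,\cdot)$ and $(\zeta(\cdot))^{-1}$, invoke Theorems \ref{Thm2.2} and \ref{Thm2.3} together with the Mathematica observations to settle $r\in(1,3.2)$, and finish the unbounded tail analytically. Where you genuinely diverge is in the tail $r\geq 3.2$. The paper switches witnesses there: it proves $F(1,r)<(\zeta(r))^{-1}$ (witness $m=1$, prime $2$) by expanding $F(1,r)=1-2^{1-r}+2^{1-2r}-2^{-3r}$, rearranging the elementary inequality $2^{1-2r}+\frac{2}{r-1}<1$ into $\left(1+2^r+\frac{2}{r-1}\right)\left(1-2^{1-r}+2^{1-2r}\right)<2^r$, and then using $\zeta(r)<1+2^{-r}+\int_2^{\infty}x^{-r}\,dx$ --- a purely algebraic, logarithm-free computation. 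You instead keep the witness $m=2$ (prime $3$) on the entire non-dense range, proving $1-3^{-r}+3^{-2r}<\prod_{i\geq 3}(1-p_i^{-r})$ for $r\geq 3.2$ via $\log(1-x)\geq -x/(1-x)$, a term-by-term comparison of $5^{-r}$, $7^{-r}$, and the integral tail against multiples of $3^{-r}$, and $\log(1+u)\leq u$; your constants do check out (one gets roughly $c\approx 0.36<\tfrac12<1-3^{-3.2}$), so the argument closes with room to spare. Your route is conceptually tidier in that a single witness $m=2$ certifies non-density on all of $(\eta_A,\infty)$, consistent with the witness already used on $(r_0,3.2)$; note that $m=1$ could not play this unified role, since $F(1,2)=\frac{39}{64}>\frac{6}{\pi^2}=(\zeta(2))^{-1}$, which is exactly why the paper must switch witnesses at $3.2$. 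What the paper's choice buys in exchange is a shorter and more elementary tail estimate. You also supply an intermediate-value argument for the existence of $\eta_A\in(1,2)$ that the paper leaves implicit ("it is easy to see"); like the paper, though, you still lean on the numerical verification for uniqueness and for the sign pattern on $(1,3.2)$, so the two proofs are equally (non-)rigorous on that point.
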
 
\begin{proof} 
It is easy to see that the number $\eta_A$ is simply the number $r_0$ discussed in the preceding paragraph. Therefore, in order to prove the theorem, it suffices (in virtue of the preceding paragraph) to show that $\displaystyle{F(1,r)<\frac{1}{\zeta(r)}}$ for all $r\geq 3.2$. For $r\geq 3.2$, we have $\displaystyle{2^{1-2r}+\frac{2}{r-1}<1}$, so \begin{equation} \label{Eq2.10}
2^{1-2r}+\frac{2}{r-1}-\frac{2^{2-r}}{r-1}+\frac{2^{2-2r}}{r-1}-1+2^r<2^r. 
\end{equation}
We may rearrange the left-hand-side of \eqref{Eq2.10} to get 
\begin{equation}
\left(1+2^r+\frac{2}{r-1}\right)\left(1-2^{1-r}+2^{1-2r}\right)<2^r, 
\end{equation} 
from which we obtain 
\begin{equation}
\left(1+2^r+\frac{2}{r-1}\right)\left(1-2^{1-r}+2^{1-2r}-2^{-3r}\right)<2^r. 
\end{equation}    
Therefore, we have 
\[F(1,r)=(1-2^{-r})(1-2^{-r}+2^{-2r})=1-2^{1-r}+2^{1-2r}-2^{-3r}<\frac{2^r}{1+2^r+\frac{2}{r-1}}\] 
\begin{equation}
=\left(1+\frac{1}{2^r}+\frac{1}{2^{r-1}(r-1)}\right)^{-1}=\left(1+\frac{1}{2^r}+\int_2^{\infty}\frac{1}{x^r}dx\right)^{-1}<\frac{1}{\zeta(r)}. 
\end{equation}
\end{proof} 
\section{An Open Problem} 
In this paper, we have found necessary and sufficient conditions for the range of a function $s_{-r}$ to be dense in $((\zeta(r))^{-1},1]$ (for $r>1$). In other words, we know exactly when the closure of the range of a function $s_{-r}$ will be the interval $[(\zeta(r))^{-1},1]$. This point of view prompts the following more general question. If we are given a positive integer $L$, then what are the values of $r>1$ such that the closure of the range of the function $s_{-r}$ is a disjoint union of exactly $L$ subintervals of $[(\zeta(r))^{-1},1]$? 

\section{Acknowledgments} 
Dedicated to Miss Raleigh S. Howard. 
\par 
The author would like to thank the unknown referee for his or her helpful advice. The author would also like to thank Professor Peter Johnson for inviting him to the 2014 REU in Algebra and Discrete Mathematics.

\end{document}